\newcommand\call{\mathcal L}
\newcommand\cala{\mathcal A}
\newcommand\calb{\mathcal B}
\newcommand\F{\mathbb F}
\renewcommand\P{\mathbb P}
\DeclareMathOperator{\Sym}{Sym}
\newtheorem{definition}{Definition}[section]
\newtheorem{thm}{Theorem}[section]
\newtheorem{corollary}[thm]{Corollary}
\newtheorem{lemma}[thm]{Lemma}
\newtheorem{problem}[thm]{Problem}
\newtheorem{theorem}[thm]{Theorem}
\newtheorem{remark}[thm]{Remark}
\newtheorem{proposition}[thm]{Proposition}
\newtheorem{thevarthm}[thm]{\varthmname}
\newenvironment{varthm*}[1]{\begin{list}{}{\labelwidth=0cm
\leftmargin=0cm
\listparindent=\parindent}\item[\hspace{\labelsep}\bfseries
#1.]\itshape}{\end{list}}
\title{Line arrangements with many triple points}
\author{Lukas~K\"uhne}
\address{Universit\"at Bielefeld, Fakult\"at f\"ur Mathematik, Bielefeld, Germany}
\email{lkuehne@math.uni-bielefeld.de}
\author{Tomasz Szemberg}
\address{Department of Mathematics, University of National Education Commission, Krakow, Podchorazych 2, PL-30-084 Kra\'ow, Poland}
\email{tomasz.szemberg@gmail.com}
\author{Halszka Tutaj-Gasi\'nska}
\address{Jagiellonian University, Faculty of Mathematics and Computer Science, Lojasiewicza 6,
	PL-30-348 Krako\'ow, Poland}
\email{halszka.tutaj-gasinska@im.uj.edu.pl}
\keywords{Line Arrangements, Matroids, Steiner triple systems}
\subjclass[2020]{05B35, 14N20, 52C30}
\date{\today}
\begin{document}

\maketitle

\begin{abstract}
In this paper, we construct an infinite series of line arrangements in characteristic two, each featuring only triple intersection points. This finding challenges the existing conjecture that suggests the existence of only a finite number of such arrangements, regardless of the characteristic. Leveraging the theory of matroids and employing computer algebra software, we rigorously examine the existence and non-existence across various characteristics of line arrangements with up to $19$ lines maximizing the number of triple intersection points.
\end{abstract}
\section{Introduction}
A lot of theorems in Euclidean and projective geometry can be expressed as statements on unexpected incidences. This philosophy applies to very basic results in school geometry, e.g., the existence of the orthocenter of a triangle is a statement about three lines (the altitudes) intersecting in one point. The classical Pappus Theorem asserts in turn that some three points are collinear, see \cite{PPG11} for a beautiful and extended overview. This means that the three lines dual to these points intersect in a single point (dual to the line they are contained in).

A lot of interest and work on arrangements with triple points was sparked by one of the problems posed by Jackson in his collection \cite{Jac1821}. In a poetic and pragmatic way, he asked if it is possible to plant nine trees so that in each row containing two of the trees, there must a third tree. The problem in a more formal way was posed by Sylvester \cite{Syl1893} and became known as the \emph{orchard problem}, see \cite{Burr1981} for a nice modern account. The problem was solved by Gallai (Gr\"unwald) in \cite{Gal44}.
\begin{varthm*}{Sylvester-Gallai Theorem}
Let $\call$ be an arrangement of lines in the real projective plane which is not a pencil (i.e. not all lines intersect in a single point). Then there exists a point where exactly two lines from $\call$ intersect (a double point of the arrangement).    
\end{varthm*}
An effective version of the orchard problem asks for a minimal number of double points in dependency on the number $s$ of lines in the arrangement. This problem was solved only recently by Green and Tao \cite{GreTao13}.

From the statement of the Sylvester-Gallai Theorem it becomes immediately clear that incidences in an arrangement of lines depend in a strong way on the underlying field. For example, it follows directly, that the only arrangement with only triple points in the real projective plane is a set of three concurrent lines. Of course this \emph{trivial} arrangement exists in any projective plane.
In the complex projective plane there is the \emph{dual Hesse arrangement} of $9$ lines intersecting altogether in $12$ points, see \cite{LBW19}. Urzua asks in \cite[Question 3]{Urz22} if the dual Hesse arrangement is the only (nontrivial) complex line arrangement with only triple points? Possible approaches to this problem are discussed in \cite[Section 4]{Bounded_and_arrangements} in the context of the bounded negativity conjecture and Harbourne constants. The problems reappear slightly more generally in \cite[Remark 6.13]{CHMN18} and in a recent work by Hanumanthu and Harbourne \cite[Question 11]{HanHar21}.

In positive characteristic there are two more examples. In characteristic $2$, there is the Fano plane $\P^2(\F_2)$ consisting of $7$ lines intersecting in exactly $7$ triple points. In characteristic $3$, there is an example of $9$ lines intersecting in $12$ points. This arrangement results from $13$ lines in the projective plane $\P^2(\F_3)$ after removing one pencil of lines (all $4$ lines passing through a fixed point). No further examples were known up to now. This motivated our research.
Our first result is  the existence of an infinite series of new examples in characteristic $2$, which in particular shows that Conjecture 1.6 in \cite{DHS22} was slightly overoptimistic.

\begin{varthm*}{Theorem A}\label{thm:A}
For any $k\geq 2$ there exists an arrangement of
$$N=2^k+2^{k-1}+\ldots+2+1$$
lines, which intersect in triple points only.
\end{varthm*}

The number of the intersection points is of course
$$\frac{(2^{k+1}-1)(2^{k+1}-2)}{6}.$$
These arrangements are contained in finite projective planes over bigger and bigger fields of characteristic $2$.
Moreover, we found a sporadic arrangement with $19$ lines over $\F_{11}$ which intersects in triple points only.

A related path of research is motivated by the following problem. For an arrangement of lines $\call$, we denote by $t_3(\call)$ the number of points where exactly three lines from $\call$ intersect.
\begin{problem}
Given a positive integer $s$ determine the number $$T_3(s)=\max_{\call} t_3(\call),$$ 
where the maximum is taken over all arrangements $\call$ of $s$ lines, with no restriction on the underlying field.
\end{problem}
These numbers have been determined completely for $s\leq 11$ in \cite{KRtriple}. 

There exists a non-trivial combinatorial upper bound on the numbers $T_3(s)$ introduced by Sch\"onheim in \cite{Sch66}.
\begin{proposition}[Sch\"onheim]\label{thm:Schoenheim}
For $s\geq 1$ let
$$U_3(s)=\left\lfloor\left\lfloor\frac{s-1}{2}\right\rfloor\frac{s}{3}\right\rfloor-\varepsilon(s),$$
where $\varepsilon(s)=1$ if $s\equiv 5\mod 6$ and $\varepsilon(s)=0$ otherwise. Then
$$T_3(s)\leq U_3(s).$$
\end{proposition}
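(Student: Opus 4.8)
The plan is to bound the triple points of an arbitrary arrangement by a double count of the flags (line $\ell$, triple point lying on $\ell$), and then to squeeze out the extra term $\varepsilon(s)$ by a parity argument in the case $s\equiv 5\pmod 6$.

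First I would isolate the one local input. Let $\call$ be an arrangement of $s$ lines and $\ell\in\call$. Every triple point of $\call$ on $\ell$ is the concurrency point of $\ell$ with exactly two further lines of $\call$, and two distinct lines meet in a single point; hence the unordered pairs of ``further lines'' attached to distinct triple points of $\ell$ are pairwise disjoint subsets of the $s-1$ lines other than $\ell$. Consequently $\ell$ carries at most $\lfloor(s-1)/2\rfloor$ triple points. (Equivalently: each pair of lines lies on at most one triple point, so the triple points form a partial Steiner triple system on the $s$ lines — this is the combinatorial content behind Sch\"onheim's estimate.) Summing over the $s$ lines and counting every triple point three times gives
$$3\, t_3(\call)\ \le\ s\left\lfloor\frac{s-1}{2}\right\rfloor,$$
whence $t_3(\call)\le\left\lfloor\left\lfloor\frac{s-1}{2}\right\rfloor\frac{s}{3}\right\rfloor$ since $t_3(\call)\in\Z$. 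This is already $U_3(s)$ whenever $\varepsilon(s)=0$, that is, whenever $s\not\equiv 5\pmod 6$.

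Next I would handle $s\equiv 5\pmod 6$. Here $s$ is odd and $s(s-1)\equiv 2\pmod 6$, so the bound just proved reads $t_3(\call)\le\frac{s(s-1)-2}{6}$ and must be improved by one. For $\ell\in\call$ put $d(\ell)=(s-1)-2\cdot\#\{\text{triple points on }\ell\}$; since $s-1$ is even this is a nonnegative even integer, and $\sum_{\ell\in\call}d(\ell)=s(s-1)-6\,t_3(\call)$. If equality held above, this sum would equal $2$, forcing one line $\ell_0$ with $d(\ell_0)=2$ and $d(\ell)=0$ for all other lines. A line with $d=0$ meets the remaining lines only in triple points. But $\ell_0$ has $\frac{s-3}{2}$ triple points, which absorb exactly $s-3$ of the other lines, so the two remaining lines $m_1,m_2$ meet $\ell_0$ in points that are forced — because two lines meet once and none of these triple points involves $m_1$ or $m_2$ — to be two \emph{distinct ordinary double points}, one on $m_1$ and one on $m_2$. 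A double point on $m_1$ contradicts $d(m_1)=0$. Hence the inequality is strict, i.e. $t_3(\call)\le\frac{s(s-1)-2}{6}-1=U_3(s)$; and as this uses nothing about the size of $s$, it also disposes of the small cases $s=5,11,\dots$.

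The routine part is the local bound and the summation; the delicate point, and the only spot where geometry matters beyond ``two lines meet once'', is the final step, where one must check that a line with $d(\ell_0)=2$ genuinely produces two honest double points sitting on lines that should have carried only triple points — in particular that these two intersection points can neither collapse to one nor acquire a third line, since either would give $\ell_0$ an extra triple point. A less self-contained alternative is to pass to the partial-Steiner-system reformulation and cite the known packing number $D(s,3,2)$ (Sch\"onheim, Spencer), but the parity argument above is shorter.
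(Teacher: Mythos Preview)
Your argument is correct. The paper itself does not prove this proposition: it is stated with attribution to Sch\"onheim and a citation, so there is no in-paper proof to compare against. What you wrote is exactly the classical elementary derivation of the packing bound $D(s,3,2)$ specialized to line arrangements --- the double count giving $3t_3(\call)\le s\lfloor(s-1)/2\rfloor$, followed by the parity refinement for $s\equiv 5\pmod 6$.

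One small remark on presentation: in the step where you argue that the two ``leftover'' lines $m_1,m_2$ meet $\ell_0$ in genuine double points, you should also say explicitly (as you do in the closing paragraph but not in the main argument) that $m_1\cap\ell_0$ cannot coincide with any of the existing $(s-3)/2$ triple points on $\ell_0$, since those are \emph{exactly} triple points and adding $m_1$ would raise the multiplicity to four. Once that is said, the dichotomy ``either $m_1\cap\ell_0=m_2\cap\ell_0$ is a new triple point (contradicting $d(\ell_0)=2$) or both are honest double points (contradicting $d(m_1)=0$)'' is airtight. The alternative route you mention --- quoting the known value of the packing number --- is indeed how the paper implicitly treats it, but your self-contained version is preferable in this context.
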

The second result concerns the values of $T_3(s)$ for $12\leq s\leq 19$.

\begin{varthm*}{Theorem B}\label{thm:B}
	We establish the following values for $T_3(s)$ summarized in this table.

	\renewcommand{\arraystretch}{1.5}
	$$\begin{array}{c|cccccccc}
	s & 12 & 13 & 14 & 15 & 16 & 17 & 18 & 19\\
	\hline
	U_3(s)& 20 & 26 & 28 & 35 & 37 & 44 & 48 & 57\\
	\hline
	T_3(s) & 19 & 23  \le 24 & 28 & 35 & 37 & 40\le  & 48 & 57
	\end{array}$$
	In particular we establish the (non)-existence of arrangements whose number of triple points is close to or matches the Schönheim bound.
\begin{itemize}
    \item[i)] There does not exist an arrangement of $12$ lines with $20$ triple points.
    \item[i')] There exists an arrangement of $12$ lines with $19$ triple points.
    \item[ii)] There does not exist an arrangement of $13$ lines with $25$ or $26$ triple points.
    \item[ii')] There exists an arrangement of $13$ lines with $23$ triple points.
    \item[iii)] There exists an arrangement of $14$ lines with $28$ triple points. 
    \item[iv)] There exists an arrangement of $15$ lines with $35$ triple points.
    \item[v)] There exists an arrangement of $16$ lines with $37$ triple points.
    \item[vi)] There exists an arrangement of $17$ lines with $40$ triple points.
    \item[vii)] There exists an arrangement of $18$ lines with $48$ triple points.
    \item[viii)] There exists an arrangement of $19$ lines with $57$ triple points. 
\end{itemize}
\end{varthm*}

Our results mainly rest on two pillars.
First we use an algorithm to compute the realization space of a matroid.
In the language of arrangements, the algorithm checks whether a given geometric lattice is in fact the intersection of some line arrangement over some field.
The details of this procedure are described in Section~\ref{sec:matroids}.
Second, we benefit from the fact that the combinatorial structure of arrangements with only triple points is know as a \emph{Steiner triple system}; a special type of a design which we discuss in Section~\ref{sec:sts}.
Subsequently, we prove Theorem~\hyperref[thm:A]{A} in Section~\ref{sec:char_2} and turn to the proof of Theorem \hyperref[thm:B]{B} in Section~\ref{sec:double_points}.

\subsection*{Open problems}
We conclude the introduction with mentioning a few open problems.
The question of whether there exists an arrangement with only triple points in a field of characteristic~$0$ with more than $9$ lines remains open.
Finally in the realm of Theorem  \hyperref[thm:B]{B}, let us mention that it remains open whether there exists arrangements with
\begin{itemize}
    \item $13$ lines and $24$ triple points and
    \item $17$ lines and at least $41$ triple points.
\end{itemize}

\subsection*{Acknowledgements.}
Our work was initiated during the \textit{Workshop on Complex and Symplectic Curve Configurations} held in Nantes, France, in December 2022. We would like to thank the organizers for stimulating fruitful discussions and providing excellent working conditions.

We would like also to thank Brian Harbourne for helpful comments on an earlier draft. We thank Piotr Pokora for a number of supporting remarks and suggestions while we were working on this project.

K\"uhne was supported by the Deutsche Forschungsgemeinschaft (DFG, German Research Foundation) -- SFB-TRR 358/1 2023 -- 491392403.

Szemberg and Tutaj-Gasi\'nska were partially supported by National Science Centre, Poland, grant 2019/35/B/ST1/00723.

\section{Preliminaries}

In this section we define the objects used throughout this note.

\subsection{Matroids and their realizations}\label{sec:matroids}
Let us start with a gentle introduction to the theory of matroids.
\begin{definition}
A \emph{matroid} $M$ is a pair $(E,\mathcal{B})$, where $E$ is a finite ground set and $\mathcal{B}$ is a non-empty set of subsets of $E$, called \emph{bases}, such that for any two bases $B,B' \in \mathcal{B}$ with some $i \in B\setminus B'$ there exists $j \in B'$ with $(B\setminus \{i\})\cup\{j\} \in \mathcal{B}$.
\end{definition}
The size of $M$ is the cardinality of the ground set $E$ and the common size of all bases is called the rank of $M$. We say that the matroid $M$ is simple if each pair of elements of $E$ is contained in at least one basis.

Let $M = (\{1,\dots, n\}, \mathcal{B})$ be a simple matroid of rank $r$. A \emph{realization} of $M$ over a given field~$\mathbb{K}$ is a matrix $P \in \mathbb{K}^{r\times n}$ such that for all subsets $X\subseteq \{1,\dots,n\}$ of size $r$ we have
\begin{equation}
\label{base}
    {\rm det} \, P_{X} \neq 0 \iff X \in \mathcal{B},
\end{equation}
where $P_{X}$ is the $r\times r$-submatrix consisting of the columns indexed by $X$.
The kernels of the linear forms given by the columns of $P$ define an arrangement $\mathcal{A}$ of $n$ hyperplanes in $\mathbb{K}^{r}$ whose intersection lattice isomorphic to the lattice of flats of matroid $M$. 

The key question for this note is whether a given matroid admits such a realization, that is whether there exists an arrangement of hyperplanes over some field having that matroid as its intersection lattice.
One can answer this question using computer algebra as we discuss now.

Observe that the condition \eqref{base} defines an ideal $I'$ in the ring $R' = R[d]$, where
$$R=\mathbb{Z}[p_{ij} \, : \, i \in \{1, ..., r\}, j \in \{1, ..., n\}]$$
given by
\begin{equation}
\label{ideall}
I' = \langle {\rm det}(P_{N}) \, : \, N \subseteq E \text{ not a basis, } |N| = r \rangle + \bigg\langle 1 - d\prod_{B \in \mathcal{B}(M)} {\rm det}(P_{B})\bigg\rangle < R[d],
\end{equation}
where $P=(p_{ij}) \in R^{r \times n}$ is an $r \times n$ matrix having the variables $p_{ij}$ as entries.
Now the \emph{realization space} of the matroid $M$ is an affine scheme, essentially described by
$$\mathcal{R}(M) :=V(I') \subseteq \mathbb{A}^{rn+1} = {\rm Spec}\, R[d] \rightarrow {\rm Spec}\, \mathbb{Z}.$$
This leads to the following observation.
\begin{proposition}\cite[Thm. 6.8.9]{Oxl11}
The matroid $M$ is realizable over some field if and only if $1 \not\in I'$.
\end{proposition}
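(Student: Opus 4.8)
\medskip
\noindent\textbf{Proof proposal.}
The plan is to prove the slightly more precise statement that, for \emph{every} field $\mathbb{K}$, the $\mathbb{K}$-points of $\mathcal{R}(M) = V(I')$ are precisely the pairs $(P,d)$ where $P \in \mathbb{K}^{r\times n}$ realizes $M$ over $\mathbb{K}$ and $d = \bigl(\prod_{B\in\mathcal{B}(M)}\det P_B\bigr)^{-1}$. Granting this, the Proposition is immediate: $V(I')\subseteq \mathbb{A}^{rn+1}$ has a point over some field if and only if $I'$ is a proper ideal of the commutative ring $R[d]$, i.e. if and only if $1\notin I'$. The device that makes this work is the Rabinowitsch trick, encoding the non-vanishing of the (finitely many, and at least one since $M$ is a matroid) basis minors $\det(P_B)$, $B\in\mathcal{B}(M)$, by the single extra equation $1 - d\prod_{B}\det(P_B) = 0$ in the auxiliary variable $d$.

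For the implication ``$M$ realizable over some field $\Rightarrow 1\notin I'$'', I would take a realization $P_0 = (a_{ij})\in \mathbb{K}^{r\times n}$. By the defining property \eqref{base} we have $\det((P_0)_N) = 0$ for every size-$r$ non-basis $N$ and $\det((P_0)_B)\neq 0$ for every $B\in\mathcal{B}(M)$, so the product $\prod_{B}\det((P_0)_B)$ is a nonzero element of the field $\mathbb{K}$; letting $d_0$ be its inverse, the point $(a_{ij}, d_0)$ annihilates every generator of $I'$ listed in \eqref{ideall}. Hence an identity $1 = \sum_k f_k g_k$ with the $g_k$ among those generators would, after applying the unique ring map $\mathbb{Z}\to\mathbb{K}$ and substituting $p_{ij}\mapsto a_{ij}$, $d\mapsto d_0$, collapse to $1 = 0$ in $\mathbb{K}$. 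So no such identity exists and $1\notin I'$.

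For the converse, assuming $1\notin I'$ I would choose a maximal ideal $\mathfrak{m}$ of $R[d]$ containing $I'$, set $\mathbb{K} := R[d]/\mathfrak{m}$ with quotient map $\pi$, and form $P_0 := (\pi(p_{ij}))\in\mathbb{K}^{r\times n}$ and $d_0 := \pi(d)$. Reducing the generators of $I'$ modulo $\mathfrak{m}$ gives $\det((P_0)_N) = 0$ for each size-$r$ non-basis $N$ and the relation $d_0\prod_{B}\det((P_0)_B) = 1$ in $\mathbb{K}$; the latter shows that $\prod_{B}\det((P_0)_B)$ is a unit, and since $\mathbb{K}$ is a field each factor $\det((P_0)_B)$ is nonzero. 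Thus $P_0$ satisfies \eqref{base} and realizes $M$ over $\mathbb{K}$, which is what we wanted.

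I do not expect a genuine obstacle here: the substance of the argument is the construction of $I'$ recalled before the statement, and what remains is bookkeeping. The one step deserving a moment's care is seeing that the \emph{single} Rabinowitsch generator forces \emph{all} basis minors to be nonzero simultaneously, which holds because a product vanishes in a field exactly when one of its factors does, and the variable $d$ is present precisely to invert that product. As a side remark, one could add that $\mathbb{K} = R[d]/\mathfrak{m}$ is automatically a finite field by the Nullstellensatz for finitely generated $\mathbb{Z}$-algebras, so the field produced in the converse may be taken finite, in line with the $\mathbb{F}_q$-constructions elsewhere in the paper.
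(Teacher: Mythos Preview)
Your argument is correct and is the standard one: the forward direction exhibits a $\mathbb{K}$-point of $V(I')$ from a realization (so $I'$ is proper), and the converse passes to a residue field $R[d]/\mathfrak{m}$ and reads off a realization from the images of the $p_{ij}$, with the Rabinowitsch generator forcing all basis minors to be units. The only remark is that the paper does not actually supply a proof of this proposition at all---it is stated with a bare citation to \cite[Thm.~6.8.9]{Oxl11}---so there is no ``paper's own proof'' to compare against; what you have written is exactly the argument one would give, and your closing observation that $R[d]/\mathfrak{m}$ is a finite field (via the Nullstellensatz over $\mathbb{Z}$) is a nice bonus that the paper does not make explicit.
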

This condition can be refined by checking realizability over algebraically closed or finite fields using Gröbner bases.
The details of this algorithm together with a tutorial on how to use it in \texttt{OSCAR} through several examples can be found in~\cite{oscar}.
In particular, this article provides several speed-up techniques to make this computation practically feasible.

\subsection{Steiner triple systems}\label{sec:sts}

The combinatorics underlying arrangements with only triple points is equivalent to Steiner triple systems, a particular type of block designs studied in coding and design theory.
\begin{definition}
	A \emph{Steiner triple system} (STS) is a finite set $S$ together a set $\mathcal{B}$ of $3$-element subsets of $S$ (called blocks) such that every pair of elements of $S$ is contained in exactly one block.
	The order of the Steiner triple system is the cardinality of the set $S$.
\end{definition}

An easy counting argument shows that Steiner triple systems of order $n$ exists if and only if $n$ is of the form $6k+1$ or $6k+3$ for some $k$.
Moreover there is a complete catalog of Steiner triple systems up to permutation of the set $S$ for the orders $7$, $9$, $13$, $15$, and $19$ of which there are $1$, $1$, $2$, $80$, and $11,084,874,829$, respectively. 
See~\cite[Cha.~5]{CD07} for an overview of Steiner triple systems.

Steiner triple systems are combinatorially equivalent to the intersection lattice of arrangements with only triple points as the blocks are then exactly the combinatorial data of the triple intersection points, that is the flats of rank $2$ in the intersection lattice.
In particular, we can use the available database of STSs and check whether the associated matroid is realizable in the sense described above.
We did that for all STSs of order $n\le 15$ and found that of these $84$ designs exactly three are realizable as arrangements.
\begin{enumerate}
	\item The unique STS of order $7$ is the Fano plane and therefore only realizable over fields of characteristic $2$.
	\item The unique STS of order $9$ is the matroid underlying the dual Hesse configuration and therefore realizable over a field $\F$ if and only if there exists a solution to the equation $x^2-x+1$ in $\F$.
	\item Exactly one of the $80$ STSs of order $15$ is realizable. It is however only realizable over fields of characteristic $2$, the smallest being $\F_{16}$.
	Combinatorially it is the truncation of the projective space $\mathbb{P}^3(\F_2)$.
	This is the basis of our general construction underlying Theorem~\hyperref[thm:A]{A} which we describe in the next section.
\end{enumerate}

\section{Arrangements with only triple points in characteristic $2$}\label{sec:char_2}
In this section, we show that truncated projective geometries over $\F_2$ provide examples of line arrangements with only triple points.
The general idea we present here was mentioned before vaguely in the coding theory literature, see for example~\cite{Limbos}.
To the best of our knowledge we give the first complete proof of this statement.
By the way of illustrating the approach we begin with the simplest example and then pass to the general case.

Let $X=\P^3(\F_2)$ be the projective $3$-space over $\F_2$ considered as a subset of $Y=\P^3(\F_{2^4})$. Let $Z$ be the union of planes in $Y$, which are generated by triples of non-collinear points in $X$.
\begin{lemma}\label{lem: existence of the projection centre}
The set $W:=Y\setminus Z$ is non-empty.
\end{lemma}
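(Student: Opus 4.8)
The plan is to show that the planes in $Z$ cannot cover all of $Y$ by a direct point count, using the fact that we work over a field large enough to separate the relevant $\F_2$-rational data.

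First I would count the points of $Y = \P^3(\F_{2^4})$. We have $|\P^3(\F_q)| = q^3+q^2+q+1$, so with $q = 16$ this is $16^3 + 16^2 + 16 + 1 = 4096 + 256 + 16 + 1 = 4369$. Next I would count the planes that make up $Z$. Each plane in $Z$ is spanned by a triple of non-collinear points of $X = \P^3(\F_2)$; equivalently, each such plane is an $\F_2$-rational plane in $Y$, i.e. a plane of $\P^3(\F_2)$. The number of planes in $\P^3(\F_2)$ equals the number of points (by duality), namely $2^3+2^2+2+1 = 15$. So $Z$ is a union of at most $15$ planes, each of which is (the $\F_{2^4}$-points of) a plane defined over $\F_2$.

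Now I would bound $|Z|$ from above. A plane in $Y$ is a copy of $\P^2(\F_{2^4})$, which has $16^2 + 16 + 1 = 273$ points. Hence a crude union bound gives $|Z| \le 15 \cdot 273 = 4095 < 4369 = |Y|$, so $W = Y \setminus Z$ is non-empty. The inequality already has slack, so I would not even need the sharper count via inclusion–exclusion (two distinct $\F_2$-rational planes meet in a line, an $\F_2$-rational $\P^1$ with $17$ points over $\F_{2^4}$), but that refinement is available if one wants a cleaner statement or the precise size of $W$.

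The only point needing care — the ``main obstacle,'' though it is mild — is the claim that the planes spanned by non-collinear triples of $X$ are exactly the $\F_2$-rational planes of $Y$, and in particular that there are only finitely many (here $15$) of them rather than a continuum. This is where working inside the single ambient space $Y$ over the \emph{specific} field $\F_{2^4}$ matters: three non-collinear $\F_2$-points span a unique plane, its defining linear form has $\F_2$-coefficients (up to scalar), and distinct non-collinear triples of $X$ may well span the same plane, but every plane so obtained is one of the $15$ planes of the subgeometry $\P^3(\F_2)\subseteq Y$. Conversely every plane of $\P^3(\F_2)$ contains $\P^2(\F_2)$, hence a non-collinear triple, so $Z$ is precisely the union of the $\F_{2^4}$-points of the $15$ planes of $\P^3(\F_2)$. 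With that identification in hand the counting above is immediate.
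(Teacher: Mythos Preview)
Your argument is correct and essentially identical to the paper's: both count the $15$ planes of $\P^3(\F_2)$, bound $|Z|\le 15\cdot 273=4095$, and compare with $|Y|=4369$. Your extra paragraph justifying that the spanned planes are exactly the $\F_2$-rational planes is a welcome clarification that the paper treats more implicitly.
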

\begin{proof}
Here, we perform a very rough count of points. Any three non-collinear points in $X$ span a plane in $X$ and also a plane in $Y$, which contains the ''smaller'' plane in $X$. A plane in $X$ contains $7=2^3-1$ points, as it is just a Fano plane. A plane in $Y$ contains $273=16^2+16+1$ points. There are $15=2^4-1$ planes in $X$ (by duality there are as many hyperplanes as there are points). They generate $15$ planes in $Y$, which altogether contain no more than $4 095= 15\cdot 273$ points. Since there are $4 369=16^3+16^2+16+1$ points in $Y$, the claim follows.
\end{proof}
Let us consider a projection $\pi$ from a point $P\in W$ to a hyperplane in $Y$. The crucial observation here is that $P$ does not lie on any line passing through $2$ or more points in $X$. Indeed, if it were contained in such a line (extended to $Y$), it would also be contained in one of the planes building the set $Z$ in Lemma \ref{lem: existence of the projection centre}, but this would contradict its choice not in $Z$. Hence $\pi$ restricted to $X$ is injective. We may think of $\pi$ as ''spreading'' the $3$-dimensional projective space $X$ on the projective plane defined over the extension field $\F_{2^4}/\F_2$.

Our next Lemma is a simple count of lines in $X$.
\begin{lemma}\label{lem: number of lines}
There are $35$ lines in $\P^3(\F_2)$.
\end{lemma}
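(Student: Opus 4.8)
The statement to prove is a pure counting fact: there are $35$ lines in $\P^3(\F_2)$. The plan is to count lines in projective $3$-space over the field with two elements using the standard subspace-counting formula.

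First I would recall that lines in $\P^3(\F_q)$ correspond bijectively to $2$-dimensional linear subspaces of the $4$-dimensional vector space $\F_q^4$. The number of such subspaces is the Gaussian binomial coefficient $\binom{4}{2}_q = \frac{(q^4-1)(q^3-1)}{(q^2-1)(q-1)}$. Substituting $q=2$ gives $\frac{15\cdot 7}{3\cdot 1} = 35$, which is exactly the claim.

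Alternatively, and perhaps more in the spirit of the elementary counts used in Lemma~\ref{lem: existence of the projection centre}, I would count incident point-line pairs in two ways. The space $\P^3(\F_2)$ has $2^3+2^2+2+1 = 15$ points, and each line contains $2+1 = 3$ points. On the other hand, through each point there pass exactly as many lines as there are points in $\P^2(\F_2)$ (the lines through a fixed point correspond to the points of the quotient plane), namely $7$. Counting the set of (point, line) incidences gives $15 \cdot 7 = 3 \cdot (\text{number of lines})$, hence the number of lines is $105/3 = 35$.

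There is essentially no obstacle here: the only thing to be careful about is using the correct formula for the number of lines through a point (equivalently, not double-counting or miscounting the $1$-dimensional subspaces of $\F_2^4$), but both the Gaussian-binomial computation and the double-counting argument are routine and mutually confirming. I would present whichever of the two is shorter, most likely just the one-line Gaussian binomial evaluation.
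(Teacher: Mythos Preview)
Your proposal is correct. The paper's own proof is an elementary count of the same flavor, but it uses a slightly different double count than either of yours: it observes that $\P^3(\F_2)$ has $15$ points, that any pair of distinct points determines a unique line, and that each line (having $3$ points) contains exactly $\binom{3}{2}=3$ such pairs, giving $\tfrac{1}{3}\binom{15}{2}=35$. This is essentially interchangeable with your incidence count, and your Gaussian-binomial argument is the cleaner, more general version of the same computation.
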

\begin{proof}
As we already observed there are $15$ points in $\P^3(\F_2)$. Every line is determined by a pair of distinct points and since there are $3$ points on each line we have
$$\mbox{number of lines in } \P^3(\F_2)\;=\; \frac{1}{3}\binom{15}{2}=35$$
as asserted.
\end{proof}

Since collinearities remain unaltered under projections, there are $35$ triples of collinear points in $X'=\pi(X)$. In the dual projective plane $H^*=\P^2(\F_{2^4})^*$ these triples of collinear points become triples of lines meeting in a point. There are altogether $15$ points in $X'$, so they correspond to $15$ lines in $H^*$. These lines intersect by $3$ in all points which are dual to the $35$ lines determined by points in $X'$. But then elementary combinatorics shows that there are no other intersection points among the $15$ lines.
\begin{corollary}\label{cor:15 lines with 35 triple points exist}
There exists  an arrangement of $15$ lines in $\P^2(\F_{2^4})$ with $35$ triple intersection points and no other intersection points.
\end{corollary}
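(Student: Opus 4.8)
The plan is to turn the discussion preceding the statement into a self-contained argument via the recipe \emph{project $X$ from a generic point, then dualize}. First I would invoke Lemma~\ref{lem: existence of the projection centre} to fix a point $P\in W=Y\setminus Z$, choose a hyperplane $H\subset Y$ with $P\notin H$ (so $H\cong\P^2(\F_{2^4})$), and let $\pi\colon Y\setminus\{P\}\to H$ be the linear projection with centre $P$. Note that every point of $X$ lies on some plane spanned by three non-collinear points of $X$ (because $X$ spans $Y$), hence $X\subseteq Z$ and in particular $P\notin X$, so $\pi$ is defined on all of $X$; put $X':=\pi(X)$. The injectivity of $\pi|_X$ is exactly the observation already made in the text: if two distinct points of $X$ had the same image, the line joining them would pass through $P$, and the plane spanned by that line and a third non-collinear point of $X$ would lie in $Z$ and contain $P$, contradicting $P\in W$. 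Thus $\lvert X'\rvert=15$.

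The heart of the proof is to pin down the incidence structure of $X'$ inside $H$. I claim every line $\ell\subset H$ meets $X'$ in at most three points. Indeed, $\pi^{-1}(\ell)\cup\{P\}$ spans a plane $\Pi\subset Y$ through $P$, and the points of $X'$ on $\ell$ correspond bijectively to the points of $X$ on $\Pi$; if $\Pi$ contained three non-collinear points of $X$ it would lie in $Z$, which is impossible, so $\Pi\cap X$ is contained in a single line of $X$ and hence has at most three points. Since projection preserves collinearity, the collinear triples of $X'$ are precisely the images of the $35$ lines of $X$ given by Lemma~\ref{lem: number of lines}, and as $35\cdot 3=\binom{15}{2}$ these cover each pair of points of $X'$ exactly once. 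In other words, $X'$ carries the structure of a Steiner triple system of order $15$.

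Finally I would dualize: let $\mathcal{L}$ be the arrangement in $\P^2(\F_{2^4})$ of the $15$ lines dual to the points of $X'$. Three points of $X'$ are collinear exactly when the three dual lines are concurrent, so each of the $35$ blocks yields a triple point of $\mathcal{L}$. Conversely, an intersection point of $\mathcal{L}$ corresponds to a line of $H$ through at least two points of $X'$; by the previous paragraph such a line carries exactly three points of $X'$ (at least three because the two points lie on a common block, at most three by the bound just proved), so every intersection point of $\mathcal{L}$ is a genuine triple point. Distinctness of the $35$ triple points follows since two blocks of an $S(2,3,15)$ meet in at most one point, while two distinct blocks producing the same point of $\P^2$ would force four or more lines of $\mathcal{L}$ through it, i.e. four or more collinear points of $X'$ — impossible. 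Hence $\mathcal{L}$ has exactly $35$ triple intersection points and no others.

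The only non-formal step is the middle one: translating the hypothesis $P\notin Z$ into the statement that \emph{no plane of $Y$ through $P$ meets $X$ in three non-collinear points}. This single fact does double duty — it caps the number of points of $X'$ on any line at three (ruling out quadruple-or-higher points) and, in combination with the Steiner property, upgrades every would-be double point to a triple point. Everything else is the bookkeeping $35\cdot 3=105=\binom{15}{2}$ and the elementary design-theoretic fact about pairwise intersections of blocks, so I expect no genuine obstacle beyond stating the plane count carefully.
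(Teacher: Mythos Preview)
Your proposal is correct and follows exactly the paper's approach --- project $X$ from a point of $W$ and dualize --- but you have written out in full the step the paper dismisses as ``elementary combinatorics shows that there are no other intersection points.'' In particular, your observation that any plane of $Y$ through $P$ meets $X$ in a collinear set (hence any line of $H$ meets $X'$ in at most three points), combined with the count $35\cdot 3=\binom{15}{2}$, is precisely the missing justification, so your argument is a faithful and more detailed rendition of the paper's own.
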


The argument above is sharp with respect to the size of the field; we determined with \texttt{OSCAR} that this matroid is not realizable over $\F_2$, $\F_4$, and $\F_8$.

In general for $n\ge 3$, we consider the lines in the projective geometry $\P^n(\F_2)$.
There are $2^{n+1}-1$ such lines that intersect in $\frac{1}{6}(2^{n+1}-1)(2^{n+1}-2)$ many points.
Crucially every such intersection point is a triple point and there are no other intersection points.

The point of the argument below is that we can find the same configuration of lines in a projective \emph{plane} of a field of characteristic two of larger size.
So these are all line arrangements of size $2^{n+1}-1$ with only triple points in larger and larger fields of characteristic two.
Note that combinatorially the matroids are the truncations of the projective geometries $\P^n(\F_2)$ to rank $3$.

The general result goes along the same lines, but we are not able to find the smallest possible $m$, such that the configuration of $2^{n+1}-1$ lines with only triple intersection points  exists in 
$\F_{2^m}$. However, we are able to show:

\begin{theorem}
Take an integer $k\geq 4$. Take $m=3k-4$. Then
 there exists an arrangement of $2^{k+1}-1$ lines in $\P^2(\F_{2^{m}})$  with $\frac{(2^{k+1}-1)(2^k-1)}{3}$ triple intersection points and no other intersection points.
\end{theorem}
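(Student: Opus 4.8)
The plan is to reproduce the geometry behind Corollary~\ref{cor:15 lines with 35 triple points exist} in a higher-dimensional ambient space: instead of projecting $\P^{3}(\F_{2})$ from a point, I would project $\P^{k}(\F_{2})$ from a linear subspace of projective codimension~$3$, and I would make the crude point count of Lemma~\ref{lem: existence of the projection centre} precise enough to read off the exponent $m=3k-4$. Concretely, set $q=2^{m}$, regard $X=\P^{k}(\F_{2})$ as a subset of $Y=\P^{k}(\F_{q})$, and let $Z=\bigcup_{\Pi}\Pi\subseteq Y$ be the union of the planes $\Pi$ that are spanned over $\F_{q}$ by the non-collinear triples of points of $X$ (equivalently, the $\F_{q}$-spans of the rank-$3$ flats of $X$); note $X\subseteq Z$. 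The first and decisive step is to find a linear subspace $L\subseteq Y$ of projective dimension $k-3$ with $L\cap Z=\emptyset$; one then projects from $L$ onto a complementary plane, $\pi\colon Y\setminus L\to\P^{2}(\F_{q})$.

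To produce $L$, I would choose points $P_{1},\dots,P_{k-2}$ greedily and set $L=\overline{P_{1},\dots,P_{k-2}}$, maintaining at every stage the invariant that $V_{i}:=\overline{P_{1},\dots,P_{i}}$ has projective dimension $i-1$ and $V_{i}\cap Z=\emptyset$. The recursion rests on the elementary fact that, for a plane $\Pi$ with $V_{i}\cap\Pi=\emptyset$, one has $\overline{V_{i},P_{i+1}}\cap\Pi=\emptyset$ exactly when $P_{i+1}\notin\overline{V_{i},\Pi}$, where $\overline{V_{i},\Pi}$ is a linear subspace of projective dimension $i+2$; since $V_{i}$ lies in each such $\overline{V_{i},\Pi}$, avoiding all of them also forces $P_{i+1}\notin V_{i}$, so the dimension genuinely grows. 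Hence a good choice of $P_{i+1}$ exists whenever
\[
 N\cdot\bigl|\P^{i+2}(\F_{q})\bigr|<\bigl|\P^{k}(\F_{q})\bigr|,
\]
where $N$ is the number of rank-$3$ flats of $\P^{k}(\F_{2})$, which equals $\tfrac{(2^{k+1}-1)(2^{k}-1)(2^{k-1}-1)}{21}$ and is in particular less than $2^{3k}/21$. Since $i\le k-3$ throughout, $\bigl|\P^{i+2}(\F_{q})\bigr|\le\bigl|\P^{k-1}(\F_{q})\bigr|=\tfrac{q^{k}-1}{q-1}$, so the displayed inequality follows from $N(q^{k}-1)<q^{k+1}-1$, which holds as soon as $N\le q$. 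Taking $m=3k-4$ gives $q=2^{3k-4}=2^{3k}/16>2^{3k}/21>N$, so the greedy construction never stalls and delivers $L$. This comparison $N<2^{3k-4}$ is precisely what pins down the exponent $3k-4$, and I expect it to be the crux of the whole argument.

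The remaining steps are a careful but essentially formal verification, parallel to the $k=3$ case. For every non-collinear triple $a,b,c\in X$ the plane $\Pi=\overline{abc}$ is disjoint from $L$, hence complementary to $L$ by a dimension count, so $\pi$ maps $\Pi$ isomorphically onto $\P^{2}(\F_{q})$. This yields at once that $\pi$ is injective on $X$; that each of the $\tfrac{(2^{k+1}-1)(2^{k}-1)}{3}$ lines of $\P^{k}(\F_{2})$ (counted exactly as in Lemma~\ref{lem: number of lines}) is carried to an honest line of $\P^{2}(\F_{q})$ containing precisely three points of $X':=\pi(X)$; and that no spurious collinearities arise, since four collinear points of $X'$ would force two of the corresponding $\F_{2}$-points to be joined in $\P^{k}(\F_{2})$ by a line through at least four points of $X$, which is impossible. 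Conversely, any two points of $X'$ already lie on one of these triple-lines, because the $\F_{2}$-line through two points of $X$ carries a third point, which projects onto the joining line. Dualizing $X'$ inside $\P^{2}(\F_{q})^{*}$ then produces an arrangement of $2^{k+1}-1$ lines in which every intersection point is a triple point and in which there are exactly $\binom{2^{k+1}-1}{2}/3=\tfrac{(2^{k+1}-1)(2^{k}-1)}{3}$ of them, as claimed; combinatorially this arrangement realizes the rank-$3$ truncation of $\P^{k}(\F_{2})$. (The value $m=3k-4$ is surely not optimal --- already for $k=3$ the argument of Corollary~\ref{cor:15 lines with 35 triple points exist} does better, in line with the remark preceding the theorem.)
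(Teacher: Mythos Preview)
Your argument is correct and follows essentially the same route as the paper: embed $X=\P^k(\F_2)$ in $\P^k(\F_{2^m})$, project to a plane from a center avoiding every plane spanned by three non-collinear points of~$X$, then dualize. The paper carries out the projection as $k-2$ successive one-point projections $\P^k\to\P^{k-1}\to\cdots\to\P^2$ and bounds the number of forbidden planes by the crude $n_3(C)=\binom{2^{k+1}-1}{3}/\binom{7}{3}$, whereas you build a single $(k-3)$-dimensional center greedily in the fixed ambient space and use the exact Gaussian-binomial count; the two are related by quotienting out $V_i$ at each stage, and both reduce to the same inequality $N<2^{m}$ that forces $m=3k-4$.
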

\begin{proof}

The idea of the proof, based on \cite{Limbos}, is to take the configuration $C$ of lines in $\P^k(\F_2)$.
We embed this space into $\P^k(\F_{2^m})$ and project the configuration to $\P^{k-1}(\F_{2^m})$ from a point such that the collinearities are preserved.
Then we repeat this process until we arrive at a configuration in $\P^2(\F_{2^m})$.
As each  line from $C$ contains 3 points from $\F_2$, the dual  configuration is the desired one.

The condition on a projection to preserve collinearities in  the configuration $C$ is that the center of the projection does not lie on any plane containing 
at least two lines from~$C$.
So we check when it is possible to make such a choice. 

The number $n_3(C)$  of planes in $\P^k(\F_{2^m})$ containing at least two lines from $C$ is at most
$$n_3(C)=\frac{\textrm{number of choices of three points from $\P^k(\F_2)$}}{\textrm{number of choices of three points on a plane in $\P^2(\F_2)$ }}$$
 so
 $$n_3(C)=\frac{\binom{2^{k+1}-1}{3}}{\binom{7}{3}}=\frac{(2^{k+1}-1)(2^{k+1}-2)(2^{k+1}-3)}{6\cdot 35}.$$

The number of points in $\P^k(\F_{2^m})$ on each such plane is $p:=(2^m)^2+2^m+1.$

We need to find a center of projection outside the planes in every $\P^j(\F_{2^m})$ for $j=3,4,\dots,k$. Thus, for each $j=3,4,\cdots k$ the number of points in  $\P^j(\F_{2^m})$, should be greater than 
$p\cdot n_3(C)$.

This means, that  we are able to find the desired center of projection if
$$p\cdot n_3(C)<(2^m)^3+(2^m)^2+2^m+1=\frac{(2^m)^4-1}{2^m-1}.$$
This yields
$$\frac{(2^m)^3-1}{2^m-1}\frac{\binom{2^{k+1}-1}{3}}{\binom{7}{3}}<\frac{(2^m)^4-1}{2^m-1},$$
which simplifies to
$$0<-(2^{3m} - 1)(2^{k+1}-1)(2^{k+1}-2)(2^{k+1}-3) + 
 210(2^{4m}-1).$$

For $m=3k-4$ this inequality is satisfied, because then we get
 $$0<-216+11\cdot2^{k+1}-3\cdot2^{2k+3}+3\cdot2^{9k-11}-11\cdot2^{10k-11}+3\cdot2^{11k-9}+2^{3k+3}+41\cdot2^{12k-15},$$
 which is true for every $k\ge 4$.
\end{proof}

This construction in fact generalizes to every finite field $\F_{q}$ which yields line arrangements over field extensions of $\F_q$ with all intersection points of size $q+1$.
\begin{remark}\label{rem:char q}
Let $k\geq 3$ be an integer, let $q>2$ be (a power of) a prime number. A reasoning similar to the one above yields a configuration of $q^{k+1}-1$ lines in $\P^2(\F_{q^{3k-5}})$ with $\frac{(q^{k+1}-1)(q^k-1)}{(q-1)(q+1)}$ points which are $(q+1)$-fold.
\end{remark}
\begin{proof}
Now take as the set $C$ all  lines in $\P^k(\F_q)$. Embed $\P^k(\F_q)$ in 
$\P^k(\F_{q^{3k-5}})$. We have to ensure the existence of a suitable center of the collinearities-preserving projection from $\P^k(\F_{q^{3k-5}})$ to $\P^{k-1}(\F_{q^{3k-5}})$, then from
    $\P^{k-1}(\F_{q^{3k-5}})$ to $\P^{k-2}(\F_{q^{3k-5}})$ etc.

Again, the projection preserves collinearities in the lines from $C$ if the center of the projection is not on any plane with at least two lines from $C$, thus again we get that the number of forbidden  points is at most 

$$((q^{3k-5})^2+q^{3k-5}+1)\cdot\frac{\binom{\frac{q^{k+1}-1}{q-1}}{3}}{\binom{q^2+q+1}{3}}.$$

After all but one projections, this number should be strictly less than the number of points in $\P^3(\F_{q^{3k-5}})$, so we get the inequality:

$$((q^{3k-5})^2+q^{3k-5}+1)\cdot\frac{\binom{\frac{q^{k+1}-1}{q-1}}{3}}{\binom{q^2+q+1}{3}}<(q^{3k-5})^3+(q^{3k-5})^2+q^{3k-5}+1.$$
This is equivalent to:
$$
0<-q^2+2q^3+2q^4
-3q^6+q^8 
+q^k-2q^{1 + k}-2q^{2 + k} + 3 q^{2 + 2 k}+ $$ 
$$+2q^{-2 + 3 k}+ 
 2 q^{-1 + 3 k}
 -3 q^{1 + 3 k} -q^{2+3k}++q^{3+3k}+$$
 $$
 +q^{-5+4k}-2q^{-4+4k}-2q^{-3+4k}+3q{-5+5k}+$$
 $$-q^{-3+6k}+q^{-8+6k}+2q^{-7+6k}+2q^{-6+6k}-3q^{-4+6k}+q^{-2+6k}+q^{-10+7k}-2q^{-9+7k}-2q^{-8+7k}+3q^{-8+8k}+$$
 $$-q^{-8+9k}-2q^{-14+9k}-q^{-13+9k}+2q^{-12+9k}+2q^{-11+9k}-3q^{-9+9k}+q^{-7+9k}.$$

Using the fact that $q\geq 3$ and $k\geq 3$ this is elementary (however tiresome) to show that this inequality holds. 
\end{proof}

\section{Line arrangements with the maximal number of triple points}\label{sec:double_points}

We begin with the following simple but useful observation.
\begin{lemma}\label{lem:steiner minus line}
    Let $\cala$ be a realizable Steiner triple system on $s$ lines. Then 
    $$T_3(s-1)=U_3(s-1),$$
    i.e., there exists an arrangement of $s-1$ lines with the number of triple points attaining the Sch\"onheim bound.
\end{lemma}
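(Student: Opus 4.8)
The plan is to start from the arrangement provided by realizability and delete a single line from it. Since $\cala$ is a realizable Steiner triple system, it determines an arrangement of $s$ lines in a projective plane over some field in which every intersection point is triple. In a projective plane two distinct lines meet in exactly one point, so the $\binom{s}{2}$ pairs of lines are distributed among the triple points, each of which accounts for $\binom{3}{2}=3$ such pairs; hence the arrangement has $\frac{1}{3}\binom{s}{2}=\frac{s(s-1)}{6}$ triple points. Steiner triple systems exist only for $s\equiv 1,3\pmod 6$, so $s$ is odd, and through a fixed line $\ell$ the remaining $s-1$ lines pair up two at a time, meeting $\ell$ in exactly $\frac{s-1}{2}$ triple points.

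Next I would remove one line $\ell$. Every triple point off $\ell$ still carries three lines of the remaining arrangement and stays triple, while each of the $\frac{s-1}{2}$ triple points on $\ell$ loses one line and degenerates to an ordinary double point; no new incidences are created. Hence the arrangement of $s-1$ lines obtained this way has exactly
$$\frac{s(s-1)}{6}-\frac{s-1}{2}=\frac{(s-1)(s-3)}{6}$$
triple points, so $T_3(s-1)\ge\frac{(s-1)(s-3)}{6}$.

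It remains to identify $\frac{(s-1)(s-3)}{6}$ with $U_3(s-1)$. Since $s$ is odd, $\lfloor\frac{(s-1)-1}{2}\rfloor=\lfloor\frac{s-2}{2}\rfloor=\frac{s-3}{2}$, and as rational numbers $\frac{s-3}{2}\cdot\frac{s-1}{3}=\frac{(s-1)(s-3)}{6}$; for $s\equiv 1,3\pmod 6$ this value is an integer, so the outer floor in the definition of $U_3$ is vacuous as well. Moreover $s-1\equiv 0,2\pmod 6$, hence $s-1\not\equiv 5\pmod 6$ and $\varepsilon(s-1)=0$. Therefore $U_3(s-1)=\frac{(s-1)(s-3)}{6}$, and combining the lower bound above with the Sch\"onheim inequality $T_3(s-1)\le U_3(s-1)$ from Proposition~\ref{thm:Schoenheim} gives $T_3(s-1)=U_3(s-1)$.

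The whole argument is elementary; the only place that needs a little care is the last paragraph, where one must check that both nested floors in the definition of $U_3(s-1)$ and the correction term $\varepsilon$ do nothing once $s$ is restricted to the admissible residues. The conceptual core — that deleting a line from a triple-point-only arrangement destroys exactly $(s-1)/2$ triple points and produces no others — is immediate from the incidence count.
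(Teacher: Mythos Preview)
Your proof is correct and follows the same approach as the paper's: delete one line from the realized Steiner system, observe that exactly $\tfrac{s-1}{2}$ triple points drop to double points, and match the remaining count against $U_3(s-1)$. The only difference is cosmetic---the paper splits into the two cases $s=6k+1$ and $s=6k+3$ and works in the parameter $k$, whereas you handle both residues uniformly in $s$ and verify the floors and the $\varepsilon$-term directly.
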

\begin{proof}
   For $s$ we have
$$\left\lfloor\left\lfloor\frac{s-1}{2}\right\rfloor\frac{s}{3}\right\rfloor=\frac{s(s-1)}{6},$$
hence $s=6k+1$ or $s=6k+3$ for a positive integer $k$ (we omit the trivial case $s=3$).

\noindent
It follows that for $s=6k+1$ we have $s-1=6k$ and
\begin{equation}\label{eq:s=6k+1}
    U_3(s-1)=\left\lfloor\left\lfloor\frac{6k-1}{2}\right\rfloor\frac{6k}{3}\right\rfloor=(3k-1)(2k)=6k^2-2k.
\end{equation}
On the other hand in $\cala$ there are exactly $\frac{s-1}{2}$ triple points on each line and $6k^2+k$ triple points altogether. Let $\calb$ be an arrangement obtained from $\cala$ by removing one line. Then the number of triple points is
$$(6k^2+k)-3k=6k^2-2k,$$
which shows that the bound in \eqref{eq:s=6k+1} is attained.

\noindent
For $s=6k+3$ we have $s-1=6k+2$ and
\begin{equation}\label{eq:s=6k+3}
    U_3(s-1)=\left\lfloor\left\lfloor\frac{6k+1}{2}\right\rfloor\frac{6k+2}{3}\right\rfloor=k(6k+2)=6k^2+2k.
\end{equation}
In $\cala$ there are this time altogether $6k^2+5k+1$ triple points and $3k+1$ points on each line. Removing as above one line from $\cala$ we obtain an arrangement $\calb$ with 
$$(6k^2+5k+1)-(3k+1)=6k^2+2k$$
triple points. Again, the bound in \eqref{eq:s=6k+3} is attained.
\end{proof}
Here we want to provide a computational proof of nonexistence (over any field) of configurations of $12$ lines with $20$ triple (and $6$ double points) and $13$ lines with $26$ triple points.

\begin{proof}[Proof of Theorem~B]
For \textbf{(i)} note that a line arrangement with $12$ lines and $20$ triple points either has one additional quadruple point or $6$ additional double points.
Consulting the database of all matroids of rank $3$ and $12$ elements yields that there is no matroid of the former and exactly $5$ matroids of the latter type.
We checked that these five matroids are all not realizable over any field (four of these matroids even contain the non-Pappus matroid which immediately implies that they are not realizable).
Thus, there is no line arrangement of $12$ lines with $20$ triple points over any field.
\medskip

\noindent
The arrangements announced in \textbf{(i')} are well known, see for example \cite{FKLBTG19} for a description of two such arrangements and their role the containment problem in commutative algebra.
\medskip

\noindent
As for \textbf{(ii)} the combinatorial structure of a line arrangement with $13$ lines and $26$ triple points is a Steiner triple system on $13$ elements.
As mentioned above, up to permutation of the elements, there are exactly two such designs~\cite{CD07}:
    \begin{footnotesize}
    \begin{align*}
       \bullet & [1, 2, 3], [1, 4, 5], [1, 6, 7], [1, 8, 9], [1, 10, 11], [1, 12, 13], [2, 4, 6], [2, 5, 8], [2, 7, 9],\\& [2, 10, 12], [2, 11, 13], 
        [3, 4, 10], [3, 5, 6],
         [3, 7, 11], [3, 8, 13], [3, 9, 12], [4, 7, 12], [4, 8, 11], \\&[4, 9, 13], [5, 7, 13], [5, 9, 10], [5, 11, 12], [6, 8, 12], [6, 9, 11], [6, 10, 13], [7, 8, 10].\\
         \bullet &[1, 2, 3], [1, 4, 5], [1, 6, 7], [1, 8, 9], [1, 10, 11], [1, 12, 13], [2, 4, 6], [2, 5, 8], [2, 7, 9],\\& [2, 10, 12], [2, 11, 13], [3, 4, 10], [3, 5, 6], [3, 7, 11], [3, 8, 12], [3, 9, 13], [4, 7, 12], [4, 8, 13],\\& [4, 9, 11], [5, 7, 13], [5, 9, 10], [5, 11, 12], [6, 8, 11], [6, 9, 12], [6, 10, 13], [7, 8, 10].
    \end{align*}
    \end{footnotesize}
    Limbos already checked that both matroids are not realizable over any field (``projectively embeddable'' in their language)~\cite{Limbos}.
    We confirmed this observation with our realization algorithm.
    Hence, there is no line arrangement with this underlying matroid.
\smallskip

Suppose there exists an arrangement with $13$ lines and $25$ triple points.
There must be one line that contains at most $5$ triple points, as we would have 26 triple points otherwise in total.
Removing this line therefore yields and arrangement with $12$ lines and at least $20$ triple points
By \textbf{(i)} such an arrangement does not exists and thus also the arrangement with $25$ triple points can not exists.

\noindent
The arrangement in \textbf{(ii')} arises by removing three triple points from one of the STSs of order $13$ and replacing them by three double intersection points each.
Computations in \texttt{OSCAR} revealed that the resulting matroid is only realizable in a field of characteristic $7$.
One such arrangement with $13$ lines and $23$ triple points is defined via the following matrix in $\F_7$.
\[
\begin{pmatrix}
1 &  1&   1&   1&   1&   0&   1&  0&   1&   1&   1&   0&   1\\
1&   4&   3&   0&   5&   1&   0&   1&   3&   4&   5&   0&   1\\
1&   6&   4&   2&   6&   1&   0&   0&   1&   0&   2&   1&   4
\end{pmatrix}.
\]
The existence of an arrangement with $13$ lines and $24$ triple points remains open for now.
\smallskip

\noindent
The Steiner triple system announced in \textbf{(iv)} exists in characteristic $2$ by Corollary \ref{cor:15 lines with 35 triple points exist}, hence also the arrangement in \textbf{(iii)} exists by Lemma~\ref{lem:steiner minus line}.
\smallskip

\noindent
\textbf{(v)}
The existence of an arrangement of $16$ lines with $37$ triple points is claimed in \cite[Theorem 2]{BGS_Orchard_1974}. The authors use certain $15$-division points on an elliptic curve to construct (after dualizing) an arrangement of $15$ lines with $31$ triple points and claim that the parameter $\alpha$ in their construction can be chosen in such a way that six out of twelve double points of the configuration are collinear. However the value of $\alpha$ is not specified and checking the construction directly requires knowledge of coordinates of $15$-division points on a specific cubic. Our approach, motivated by \cite{BGS_Orchard_1974} is much more explicit.

The matroid we want to realize geometrically is determined by the following triples (of lines intersecting in one point):
$$\begin{array}{cccccccc}
1, 8, 9, & 1, 7, 10, & 1, 6, 11, & 1, 5, 12, & 1, 4, 13, & 1, 3, 14, & 1, 2, 15, & 2, 7, 9,\\
2, 6, 10, & 2, 5, 11, & 2, 4, 12, & 2, 3, 13, & 2, 8, 16, & 3, 7, 8, & 3, 6, 9, & 3, 5, 10,\\
3, 4, 11, & 3, 12, 16, & 4, 6, 8, & 4, 5, 9, & 4, 14, 15, & 4, 10, 16, & 5, 6, 7, & 5, 13, 15,\\
5, 14, 16, & 6, 13, 14, & 6, 12, 15, & 7, 12, 14, & 7, 11, 15, & 7, 13, 16, & 8, 12, 13, & 8, 11, 14,\\
8, 10, 15, & 9, 11, 13, & 9, 10, 14, & 9, 15, 16, & 10, 11, 12. & & &
\end{array}
$$
We checked with \texttt{OSCAR} that it can be realized over the reals by lines given by equations $\alpha x+\beta y+\gamma z=0$ whose coefficients $(\alpha, \beta, \gamma)$ are given by the rows of the following matrix
\begin{footnotesize}
$$
\begin{pmatrix}
1 & 0 & 0 \\
0 & 1 & 0 \\
0 & 0 & 1 \\
1 & 1 & 1 \\
1 & 1 & 2 \\
0 & 1 & 1 \\
1 & -2\varepsilon + 3 & 2 \\
1 & 2\varepsilon - 2 & 4\varepsilon - 4 \\
1 & -2\varepsilon^2 + 3\varepsilon & 2\varepsilon - 1 \\
1 & -2\varepsilon^2 + 3\varepsilon & \varepsilon \\
1 & 2\varepsilon - 2 & 2\varepsilon - 1 \\
1 & -2\varepsilon + 3 & 4\varepsilon - 4 \\
1 & -2\varepsilon^2 + 2\varepsilon + 1 & 1 \\
1 & 0 & 2\varepsilon-2 \\
1 & -2\varepsilon + 1 & 0 \\
1 & -2\varepsilon^2 + 2\varepsilon + 1 & \varepsilon
\end{pmatrix},
$$
\end{footnotesize}
where $\varepsilon$ satisfies the equation $4\varepsilon^2-6\varepsilon+1=0$.

A nice realization of the same matroid over $\F_{11}$ is given by the same token by the columns of the following matrix:
\begin{footnotesize}
$$
\begin{pmatrix}
1&   0&   0&   1&   1&   1&   1&    1&   1&   1&   1&   1&   0&   1&   1&    1\\
0&   1&   0&   1&   5&   7&   6&    6&   7&   5&   1&   8&   1&   0&   3&    8\\
0&   0&   1&   1&   2&   3&   8&   10&   8&   3&   2&   1&   1&   7&   0&   10    
\end{pmatrix}.$$
\end{footnotesize}
\smallskip

\noindent
We now turn to the case of $17$ lines in \textbf{(vi)}. The existence of an arrangement with $40$ triple points follows from the construction in \cite{BGS_Orchard_1974}. It is not known whether there are arrangements with more than $40$ triple points.
\smallskip

\noindent
We describe an arrangement of $19$ lines with only triple points in $\F_{11}$ in Section~\ref{sec:sporadic}.
This proves the claimed statement in \textbf{(viii)}.
Hence also the arrangement in \textbf{(vii)} exists by Lemma~\ref{lem:steiner minus line}.
There also exists a combinatorially different arrangement of $18$ lines with $48$ triple points in characteristic zero described by Tombarkiewicz and Zi\c eba in \cite{TomZie21}.
\end{proof}
    
\subsection{Sporadic example}\label{sec:sporadic}

The automorphism group of a matroid $M=([n],\mathcal{B})$ is the group of the permutations $\sigma\in \Sym([n])$ that preserve the bases $\mathcal{B}$, that is $B\in\mathcal{B}$ if and only if $\sigma(B)\in \mathcal{B}$.

We observed that matroids underlying the line arrangements with only triple points all have a relatively large automorphism group.
The matroid of the dual Hesse arrangement has an automorphism group of size $432$.
The matroids of the family of line arrangements in characteristic $2$ in Section~\ref{sec:char_2} have the automorphism group $PSL_n(2)$ with
\[
|PSL_n(2)|=\prod_{i=0}^{n-1}(2^n-2^i).
\]

We therefore studied Steiner triple systems with a relatively large group of automorphisms.
All Steiner triple systems of order $19$ and $21$ with a non-trivial automorphism group were computed in~\cite{sts_symmetric_19} and \cite{symmetric_sts_21}, respectively.

There are exactly $104$ Steiner triple systems of order $19$ with an automorphism group of order at least $9$, and these are given in the appendix to~\cite{sts_symmetric_19}.
We checked the realizability of the corresponding matroids and somewhat surprisingly, exactly one of those matroids is realizable.
The matroid has an automorphism group of order $57$ and it is only realizable over $\F_{11}$.
It is in fact a protectively unique arrangement and one realization is given by this matrix over $\F_{11}$:
\[
\begin{pmatrix}
1 &  0 &  0 & 1 &  1 & 0 &  1 & 1 & 1 &  1 &  1 &  1 & 1 &  1 &  1 & 1 &  1 & 1 &  1\\
0  & 1 &  0 &  1 &  0 &  1 &  1 &  10 &  2 &   3 &   4 &   8 &  10 &   6 & 8 & 2 &   6 &  3 & 4\\
0   & 0 &  1&   0 &  1  & 5  & 6  &  1 &  10 &  7 &  6 &  3 &    4 &  2 & 10 &   7&   4&  2 &  3 
\end{pmatrix}.
\]

So this is a line arrangement over $\F_{11}$ with $19$ lines and $57$ triple points.
We verified that the automorphism group $G_{57}$ acts transitively on the triple points, so we have a $G_{57}$-orbit of triple intersection points. Moreover, we found that there exists another Steiner triple system of order $19$ with automorphism group $G_{57}$ of order $57$, but it cannot be realized over any field, which makes our sporadic example even more intriguing. 

Subsequently, we consider realizability of all Steiner triple systems of order $21$ with an automorphism group of size at least $7$ enumerated in~\cite{symmetric_sts_21}.
All corresponding matroids were however not realizable over any field.


\end{document}